\apptocmd{\sloppy}{\hbadness 10000\relax}{}{}
\theoremstyle{definition}
\newtheorem{theorem}{Theorem}[section]
\newtheorem{lemma}[theorem]{Lemma}
\newtheorem{conjecture}[theorem]{Conjecture}
\newtheorem*{conjecture*}{Conjecture}
\newtheorem*{question*}{Question}
\theoremstyle{remark}
\newtheorem*{remark*}{Remark}
\newtheorem*{remarks*}{Remarks}
\newtheorem*{claim*}{Claim}
\numberwithin{equation}{section}
\newcommand{\Z}{\mathbb{Z}}
\newcommand{\Q}{\mathbb{Q}}
\newcommand{\R}{\mathbb{R}}
\newcommand{\id}{\mathrm{id}}
\newcommand{\inn}{\mathrm{int}}
\newcommand{\pia}{\pi_{1}(Y_{1})}
\newcommand{\pib}{\pi_{1}(Y_{2})}
\newcommand{\pii}{\pi_{1}(Y_{i})}
\newcommand{\piw}{\pi_{1}(W)}
\newcommand{\piwa}{\pi_{1}(W_{1})}
\newcommand{\piwb}{\pi_{1}(W_{2})}
\newcommand{\piwi}{\pi_{1}(W_{i})}
\newcommand{\pibarw}{\pi_{1}(\barw)}
\newcommand{\barw}{\overline{W}}
\newcommand{\rna}{R_{N}(Y_{1})}
\newcommand{\rnb}{R_{N}(Y_{2})}
\newcommand{\rnw}{R_{N}(W)}
\newcommand{\son}{\mathrm{SO}(n)}
\begin{document}

\title{Ribbon cobordisms as a partial order}

\author{Marius Huber}
\address{Department of Mathematics, Boston College, Chestnut Hill, MA 02467}
\email{marius.huber@bc.edu}






\begin{abstract}
We show that the notion of ribbon rational homology cobordism yields a partial order on the set of aspherical $3$-manifolds, thus supporting a conjecture formulated by Daemi, Lidman, Vela-Vick and Wong.
Our proof is built on Agol's recent proof of the fact that ribbon concordance yields a partial order on the set of knots in the 3-sphere.
\end{abstract}

\maketitle


\section{Introduction}\label{sec_intro}

In a recent preprint \cite{Agol}, Agol famously proved that ribbon concordance  defines a partial order on the set of knots in $S^{3}$, thus resolving a long-standing conjecture \cite[Conjecture 1.1]{Gordon} due to Gordon.
Agol's proof relies on relating maps between knot groups to maps between the representation varieties of those groups.
Before Agol's result, work by Zemke \cite{Zemke} sparked renewed interest in the study of ribbon concordances (see e.g. \cite{Levine-Zemke}, \cite{Juhasz-Miller-Zemke} and \cite{Sarkar}), which was eventually put into a broader framework in an article by Daemi, Lidman, Vela-Vick and Wong \cite{DLVVW} in which they defined and studied ribbon rational homology cobordisms.

Recall that, given closed, oriented $3$-manifolds $Y_{1}$ and $Y_{2}$, a rational homology cobordism from $Y_{1}$ to $Y_{2}$ is an oriented $4$-manifold $W$ with oriented boundary $\partial W=-Y_{1}\amalg Y_{2}$ such that inclusion of $Y_{i}$ into $W$ induces an isomorphism $H_{*}(Y_{i};\Q)\cong H_{*}(W;\Q)$, $i=1,2$.
Such a cobordism $W$ from $Y_{1}$ to $Y_{2}$ is said to be ribbon, if $W$ admits a handle decomposition relative to $Y_{1}\times I$ that consists of just $1$- and $2$-handles, where $I=[0,1]$ (for the corresponding notion for $3$-manifolds with boundary, see ``Conventions and notation" at the end of this section).
In the following, we refer to ribbon rational homology cobordisms simply as \emph{ribbon cobordisms}.

This terminology stems from the fact that the exterior of a ribbon concordance $C\subset S^{3}\times I$ from a knot $K_{1}$ to $K_{2}$ admits a natural handle decomposition which makes $(S^{3}\times I)\setminus\nu C$ into a ribbon cobordism from $S^{3}\setminus\nu K_{1}$ to $S^{3}\setminus\nu K_{2}$ in the above sense.
Using this point of view, Daemi, Lidman, Vela-Vick and Wong formulated a conjecture analogous to that of Gordon.

\begin{conjecture}[\text{Daemi-Lidman-Vela-Vick-Wong \cite[Conjecture 1.1]{DLVVW}}]\label{conj_DLVVW}
The preorder on the set of homeomorphism classes of closed, connected, oriented $3$-manifolds given by ribbon cobordisms is a partial order.
\end{conjecture}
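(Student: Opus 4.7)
The plan is to establish antisymmetry of the ribbon cobordism preorder: reflexivity (via the product cobordism $Y\times I$) and transitivity (via gluing) are immediate, so the content is in showing that if ribbon cobordisms $W_{1}:Y_{1}\to Y_{2}$ and $W_{2}:Y_{2}\to Y_{1}$ both exist, then $Y_{1}\cong Y_{2}$. The central algebraic input, recorded in \cite{DLVVW} following \cite{Agol}, is that the inclusion of one boundary component into a ribbon cobordism induces a surjection on fundamental groups, because from that end the cobordism is built by attaching only $2$- and $3$-handles to a collar.

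My strategy is to reduce to prime, irreducible pieces via geometrization and then treat each geometric type separately. A closed, connected, oriented, prime $3$-manifold is, by Perelman, either $S^{1}\times S^{2}$, a spherical space form, or aspherical. I would first argue that a pair of opposing ribbon cobordisms between $Y_{1}$ and $Y_{2}$ forces a bijection between their Kneser-Milnor prime factorizations together with ribbon cobordisms between matched pieces; the tool here is Grushko's theorem, which detects free factors through $\pi_{1}$-surjections, combined with a sphere-surgery argument \`a la Laudenbach to arrange that reducing $2$-spheres in $Y_{i}$ bound $3$-balls in the cobordism. Once reduced, the $S^{1}\times S^{2}$ case is handled by a direct rational-homology calculation together with $\pi_{1}=\Z$, and the spherical case reduces to a finite-group argument exploiting that spherical space forms are classified by $\pi_{1}$ and that Hopficity is automatic for finite groups.

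The substantive case is $Y_{1}$ and $Y_{2}$ both prime and aspherical. Here I follow Agol's blueprint of passing to representation varieties. Form the composite $W:=W_{2}\cup_{Y_{2}}W_{1}:Y_{1}\to Y_{1}$, itself a ribbon cobordism; the ribbon surjections assemble into an endomorphism $\varphi$ of $\pi_{1}(Y_{1})$ that factors through the intermediate quotient $\pi_{1}(W)$. Using the residual finiteness and LERF of $3$-manifold groups (Hempel, Agol-Wise), I would show that $\pi_{1}(Y_{1})$ admits enough finite-dimensional orthogonal representations to separate any putative kernel element of $\varphi$, which translates into injectivity of the induced map $R_{N}(Y_{1})\to R_{N}(Y_{1})$ on representation varieties into $\son$ for large enough $N$. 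Combined with Hopficity of finitely generated residually finite groups, this upgrades the ribbon surjection $\pi_{1}(Y_{1})\twoheadrightarrow\pi_{1}(W)$ to an isomorphism, so that the inclusion of the $Y_{2}$-slice into $W$ realizes an isomorphism $\pi_{1}(Y_{2})\cong\pi_{1}(Y_{1})$. A final appeal to Waldhausen and Mostow rigidity (together with geometrization) promotes this $\pi_{1}$-isomorphism to a homeomorphism $Y_{1}\cong Y_{2}$.

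The main obstacle I anticipate is the reduction to prime pieces in the second paragraph: an essential $2$-sphere in $Y_{i}$ need not bound a ball on both sides inside the cobordism, and the Grushko decomposition interacts non-trivially with the handle structure of $W$. A possibly cleaner alternative is to bypass geometric decomposition entirely and run the representation-variety argument directly on $\pi_{1}(Y_{1})$ viewed as a free product of prime-summand groups, exploiting that the representation variety of a free product factors as a product of the individual varieties and that the ribbon surjection respects this factorization; this would collapse the non-aspherical and aspherical cases into a single argument. The aspherical subcase, by contrast, should follow Agol's template almost mechanically once the ribbon-surjectivity data is in place, and the main novelty over \cite{Agol} will be in upgrading the representation-theoretic injectivity from knot groups to closed-$3$-manifold groups where no natural meridional generator is at hand.
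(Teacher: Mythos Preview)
The paper does not prove this statement; it is recorded as an open conjecture, and the paper establishes only the special case (Theorem~\ref{thm_conj}) where both $Y_{1}$ and $Y_{2}$ are aspherical or both are lens spaces. Your proposal attempts the full conjecture and contains genuine gaps beyond the one you flag.

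First, the spherical case does not reduce to a $\pi_{1}$ argument: spherical space forms are \emph{not} classified by their fundamental groups. The lens spaces $L(p,q)$ and $L(p,q')$ both have $\pi_{1}\cong\Z/p$ but are homeomorphic only when $q'\equiv\pm q^{\pm 1}\pmod p$; distinguishing them requires finer invariants (Reidemeister torsion, or in the ribbon setting the $d$-invariant obstructions of \cite{Huber} that the paper invokes). Hopficity of finite groups gives you nothing here. Second, the reduction to prime summands is, as you suspect, a real obstacle and not a technicality: there is no known mechanism for cutting a ribbon $\Q$-homology cobordism along a system of $3$-balls matching the Kneser--Milnor spheres, and the alternative you suggest---running the representation-variety argument directly on a free product---does not survive contact with the lens-space issue above, since representation varieties of cyclic groups into $\son$ cannot distinguish $L(7,1)$ from $L(7,2)$.

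For the aspherical case your outline is close to the paper's, but one step is understated. An abstract isomorphism $\pi_{1}(Y_{1})\cong\pi_{1}(Y_{2})$ is not by itself enough to conclude $Y_{1}\cong Y_{2}$ as \emph{oriented} manifolds; one needs an orientation-preserving homotopy equivalence before invoking the Borel conjecture in dimension three \cite{Kreck-Lueck}. The paper obtains this by building an explicit retraction $W\to Y_{2}$ (using $\pi_{2}(Y_{2})=0$ to extend over the $3$-handles) and checking that the composite $Y_{1}\hookrightarrow W\to Y_{2}$ has degree one. Your appeal to ``Waldhausen and Mostow rigidity'' skips this and would at best yield a homeomorphism of unspecified orientation.
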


As speculated by Agol \cite{Agol}, it is natural to wonder whether the techniques used to prove Gordon's Conjecture could be used to make progress in proving Conjecture \ref{conj_DLVVW}.
The purpose of the present note is to do exactly that, and to show that Conjecture \ref{conj_DLVVW} holds for the class of aspherical $3$-manifolds (recall that a $3$-manifold $Y$ is aspherical if $\pi_{k}(Y)=0$ for all $k\geq 2$, or, equivalently, if $Y$ is irreducible and has infinite fundamental group).
Combining this with previous work of the author \cite{Huber} on ribbon cobordisms between lens spaces, we obtain the following.
Note that we do not require the manifolds involved to be closed.

\begin{theorem}\label{thm_conj}
Let $Y_{1}$ and $Y_{2}$ be compact, oriented, $3$-manifolds, possibly with boundary, such that there exists a ribbon cobordism $W_{i}$ from $Y_{i}$ to $Y_{j}$, $\{i,j\}=\{1,2\}$.
If $Y_{i}$ is either aspherical or a lens space, $i=1,2$, then $Y_{1}\cong Y_{2}$.
\end{theorem}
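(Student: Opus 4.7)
The plan is to adapt Agol's representation-variety argument \cite{Agol} from the setting of knot exteriors to the broader one of aspherical 3-manifolds. The theorem splits into cases: when both $Y_{1}$ and $Y_{2}$ are lens spaces, the conclusion is furnished by the author's previous work \cite{Huber}; when both are aspherical, a new argument is needed; and the mixed case (one aspherical, one a lens space) will drop out of the main argument, since if one side is finite cyclic, the retraction established below forces the other's fundamental group to be finite as well, contradicting asphericity.

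The aspherical case begins with the basic handle-decomposition observation: a ribbon cobordism $W:Y_{1}\to Y_{2}$ built on top of $Y_{1}\times I$ by $1$- and $2$-handles is, when turned upside down, built on top of $Y_{2}\times I$ by $2$- and $3$-handles, so the top inclusion $Y_{2}\hookrightarrow W$ is $\pi_{1}$-surjective, yielding $\pib\twoheadrightarrow\piw$. The heart of the argument is to show that the bottom inclusion $Y_{1}\hookrightarrow W$ is a $\pi_{1}$-\emph{retract}, i.e., that the induced map $i_{*}:\pia\to\piw$ admits a left inverse $r:\piw\twoheadrightarrow\pia$. For this I would adapt Agol's strategy of passing to $\son$-representation varieties: a surjection of groups induces a closed embedding on representation varieties, and one exploits the residual finiteness (and stronger separability properties) of the groups involved to upgrade a variety-level retraction to a group-level one. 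Geometrization together with the work of Agol and Wise on virtually compact special cube complexes supplies the needed separability for fundamental groups of aspherical 3-manifolds, so Agol's scheme should port over once one verifies that the representation-theoretic inputs continue to hold in this broader setting.

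Granted the retraction, the rest is quick. Composing the top $\pi_{1}$-surjection of $W_{1}$ with the bottom retraction produces a surjection $\pib\twoheadrightarrow\pia$, and symmetrically $\pia\twoheadrightarrow\pib$. Their composition is a surjective endomorphism of $\pia$, which must be an isomorphism since $\pia$ is finitely generated and residually finite (Malcev's theorem on Hopficity); hence $\pia\cong\pib$. To upgrade this to $Y_{1}\cong Y_{2}$ one invokes the rigidity of aspherical 3-manifolds: in the closed case, $\pi_{1}$-isomorphism implies homeomorphism by Waldhausen's theorem, Mostow rigidity, and Geometrization, and in the bounded case asphericity forces the boundary to consist of incompressible tori, making the manifolds Haken, so Waldhausen's theorem gives a homeomorphism directly. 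The principal obstacle will be implementing Agol's representation-variety argument for arbitrary aspherical 3-manifold groups, since one must check that these groups admit enough representations into compact Lie groups to separate their elements and satisfy the subgroup-separability conditions entering Agol's proof in the knot-group case.
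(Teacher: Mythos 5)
Your overall architecture (lens spaces via \cite{Huber}, aspherical case via an Agol-style argument, rigidity at the end) matches the paper, but the step you call the ``heart'' of the argument is not something Agol's method delivers, and as stated it fails. Agol's representation-variety argument does not produce a group-level retraction $\piw\twoheadrightarrow\pia$ splitting the bottom inclusion. What it proves is that the \emph{top}, already $\pi_{1}$-surjective inclusion is also injective: one shows $R_{n}(W)$ sits inside $R_{n}(Y_{2})$ as a real algebraic subset admitting a surjective polynomial map onto $R_{n}(Y_{1})$, and when the two end varieties are isomorphic a Noetherian argument (\cite[Lemma A.2]{Agol}) forces $R_{n}(W)=R_{n}(Y_{2})$; residual finiteness then kills the kernel of $i_{2}$. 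There is no mechanism for ``upgrading a variety-level retraction to a group-level one'' via residual finiteness, and no splitting of $i_{1}$ comes out of this. Note also that to invoke the Noetherian lemma you need the representation varieties at the two ends to be isomorphic, so you must first compose $W_{1}\cup_{Y_{2}}W_{2}$ into a self-cobordism of $Y_{1}$; running the argument on a single cobordism between a priori different manifolds, as you propose, does not get off the ground. The surjectivity of $i_{1}$ (equivalently, the surjection $\pib\twoheadrightarrow\pia$ you want) requires a genuinely topological input that your outline is missing, and it is exactly here that asphericity enters: one builds an honest retraction $\rho\colon -W\to Y_{2}$ by mapping the $2$-handle cores of the reversed cobordism to disks bounded by their (null-homotopic, by the injectivity just established) attaching curves and extending over the $3$-handles, the obstruction living in $H^{3}(-W,\cdot\,;\pi_{2}(Y_{2}))=0$. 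Then $f=\rho\circ\iota_{1}\colon Y_{1}\to Y_{2}$ is an orientation-preserving degree-one map with $f_{*}$ injective, and degree-one maps are $\pi_{1}$-surjective (\cite[Lemma 1.2]{Rong}), which closes the loop. Your proposal uses asphericity only in the final rigidity step, which is a sign that the middle of the argument is not right.

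Two smaller points about the endgame. First, for closed aspherical $3$-manifolds an abstract $\pi_{1}$-isomorphism implies homeomorphism by the Borel conjecture in dimension three (Kreck--L\"uck), not by Waldhausen plus Mostow alone: those do not cover the closed non-Haken Seifert-fibered case. Second, the conclusion $Y_{1}\cong Y_{2}$ is an \emph{orientation-preserving} homeomorphism, and a purely group-theoretic isomorphism obtained from Hopficity carries no orientation data; the paper gets this for free because the homotopy equivalence is realized by the degree-one map $f$ above. In the bounded case your appeal to Waldhausen is fine once one has a homotopy equivalence of pairs, which again comes from the explicit map $f$ rather than from an abstract isomorphism of groups.
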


The above result is a rather direct consequence of the following two more technical results.
To put these into context, recall that if $Y_{1}$ and $Y_{2}$ are compact $3$-manifolds and  if $W$ is a ribbon cobordism from $Y_{1}$ to $Y_{2}$, one has the following diagram of maps induced by inclusion (\cite[Theorem 1.14]{DLVVW} and \cite[Lemma 3.1]{Gordon}):

\begin{equation*}
\begin{tikzcd}
\pia \arrow[hook]{r} & \piw & \pib \arrow[swap, two heads]{l}
\end{tikzcd}
\end{equation*}

Hence, if $Y$ is a $3$-manifold with finite fundamental group, and if $W$ is a ribbon cobordism from $Y$ to itself, then the inclusion of either boundary component into $W$ induces an isomorphism of fundamental groups.
Our main technical result states that this remains true if $Y$ has infinite fundamental group, provided that $Y$ is aspherical.

\begin{theorem}\label{mainthm}
Let $Y$ be a compact, oriented, aspherical $3$-manifold, possibly with boundary, and suppose that $W$ is a ribbon cobordism from $Y_{1}$ to $Y_{2}$, where $Y_{i}\cong Y$, $i=1,2$.
Then the inclusion of $Y_{i}$ into $W$ induces an isomorphism $\pii\cong\piw$, $i=1,2$.
\end{theorem}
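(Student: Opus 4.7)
The plan is to adapt Agol's representation-variety argument. Fix a homeomorphism $Y_{1}\cong Y_{2}\cong Y$, identify $\pi_{1}(Y_{i})$ with $G:=\pi_{1}(Y)$, and write $f_{1}\colon G\hookrightarrow\piw$ (injective) and $f_{2}\colon G\twoheadrightarrow\piw$ (surjective) for the two inclusion-induced maps appearing in the diagram preceding the statement. The main goal is to show $\ker f_{2}=\{1\}$; the fact that $f_{1}$ is then also an isomorphism follows by running the same argument with the roles of $Y_{1}$ and $Y_{2}$ interchanged (after suitably reversing $W$) or by a direct conjugation argument in $\piw$ exploiting that $Y_{1}$ and $Y_{2}$ bound the connected manifold $W$.

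For each $N\geq 1$, consider the affine variety $R_{N}(H):=\mathrm{Hom}(H,\son)$. Surjectivity of $f_{2}$ yields a closed embedding $f_{2}^{*}\colon\rnw\hookrightarrow R_{N}(G)$, identifying $\rnw$ with the closed subvariety $V_{N}\subseteq R_{N}(G)$ of representations trivial on $\ker f_{2}$. Since $3$-manifold groups are linear (by Hempel together with geometrization and the work of Agol and Wise), the intersection over all $N$ of the kernels of all $\son$-representations of $G$ is trivial; hence to obtain $\ker f_{2}=\{1\}$ it suffices to prove $V_{N}=R_{N}(G)$ for every $N$.

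The main step---and the one I expect to be the most delicate---is establishing this equality. Following Agol, the injection $f_{1}$ gives rise to a morphism $\Phi_{N}\colon V_{N}\to R_{N}(G)$ sending $\rho$ to $\tilde\rho\circ f_{1}$, where $\tilde\rho\colon\piw\to\son$ denotes the unique lift of $\rho$ through $f_{2}$. The handle decomposition of $W$ relative to $Y_{1}\times I$ together with asphericity of $Y$ should force $\Phi_{N}$ to coincide, up to conjugation in $\son$, with the natural inclusion $V_{N}\hookrightarrow R_{N}(G)$. A dimension count on components of $R_{N}(G)$ then forces $V_{N}=R_{N}(G)$. This extends the ``abelian characters'' portion of Agol's $SU(2)$-argument for knot groups to the full $\son$-representation variety required in the aspherical setting, and the principal obstacle is carrying out this comparison uniformly in $N$---in particular, verifying that the handle relations plus asphericity yield the required free-homotopy compatibility between the two boundary inclusions at the level of every finite-dimensional orthogonal representation.
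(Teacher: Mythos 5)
Your overall strategy for the surjective side---embed $\rnw$ into $R_{N}(G)$ via $f_{2}^{*}$, use separation of points by $\son$-representations (residual finiteness is the input actually needed here; linearity is a detour), and force $V_{N}=R_{N}(G)$---is the right Agol-style skeleton and matches the paper. But the mechanism you propose for the key equality $V_{N}=R_{N}(G)$ is not correct: there is no reason that $\Phi_{N}$ should ``coincide up to conjugation with the natural inclusion,'' and asphericity plays no role in this step. What is actually true, and what the argument needs, is that $\Phi_{N}$ (restriction of representations of $\piw$ to the $Y_{1}$-boundary, transported to $R_{N}(G)$) is a \emph{surjective} polynomial map $V_{N}\to R_{N}(G)$. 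Surjectivity comes from the ribbon handle structure together with compactness of $\son$: every representation of $\pia$ into a compact Lie group extends over the $1$- and $2$-handles of $W$ (this is \cite[Proposition 2.1]{DLVVW}, going back to Gordon). Given a surjective polynomial map from the algebraic subset $V_{N}\subseteq R_{N}(G)$ onto $R_{N}(G)$, Agol's Lemma A.2 (an induction on dimension and the number of top-dimensional irreducible components) yields $V_{N}=R_{N}(G)$. Your ``dimension count on components'' gestures at this, but without the surjectivity input the argument does not close.

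The second, and more serious, gap is your treatment of $f_{1}$. Reversing $W$ does \emph{not} produce a ribbon cobordism from $Y_{2}$ to $Y_{1}$: the handle decomposition turns upside down into $2$- and $3$-handles, so the symmetric argument is unavailable, and the ``direct conjugation argument'' is not an argument. This is precisely where asphericity is used in the paper. Since $i_{2}$ is now known to be an isomorphism, the $2$-handles of the reversed cobordism $\barw$ are attached along null-homotopic curves in $Y_{2}$, so one can map their cores into $Y_{2}$ and build a retraction $\rho\colon\barw\to Y_{2}$; the obstruction to extending over the $3$-handles lies in $H^{3}(\barw,\barw_{2};\pi_{2}(Y_{2}))$, which vanishes because $Y_{2}$ is aspherical. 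The composite $f=\rho\circ\iota_{1}\colon Y_{1}\to Y_{2}$ is then an orientation-preserving degree-one map which is injective on $\pi_{1}$ (because $i_{1}$ is injective and $\rho_{*}$ is an isomorphism), and degree-one maps of compact $3$-manifolds induce surjections on $\pi_{1}$ \cite[Lemma 1.2]{Rong}; hence $f_{*}$, and therefore $i_{1}$, is an isomorphism. Without some such argument your proof establishes only that $i_{2}$ is an isomorphism.
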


As a consequence, we obtain the following result concerning pairs of $3$-manifolds with the property that there exists a ribbon cobordism in either direction.

\begin{theorem}\label{mainthm_2}
Let $Y_{1}$, $Y_{2}$ be compact, oriented, aspherical $3$-manifolds, $i=1,2$, possibly with boundary, and suppose that there exists a ribbon cobordism $W_{i}$ from $Y_{i}$ to $Y_{j}$, $\{i,j\}=\{1,2\}$.
Then the inclusion of $Y_{i}$ into $W_{j}$ induces an isomorphism $\pii\cong\pi_{1}(W_{j})$, $i,j=1,2$.
In particular, there exists an orientation-preserving homotopy equivalence $f\colon(Y_{i},\partial Y_{i})\to(Y_{j},\partial Y_{j})$, $\{i,j\}=\{1,2\}$.
\end{theorem}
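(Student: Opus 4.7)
The plan is to stack the given cobordisms into ribbon self-cobordisms of $Y_{1}$ and $Y_{2}$, apply Theorem~\ref{mainthm}, and then use van Kampen to isolate the $\pi_{1}$-behavior of the individual pieces. Set $W := W_{1} \cup_{Y_{2}} W_{2}$ and $W' := W_{2} \cup_{Y_{1}} W_{1}$. Concatenating the given handle decompositions shows that each is a ribbon cobordism (only $1$- and $2$-handles), and Mayer-Vietoris shows the composition of rational homology cobordisms is again a rational homology cobordism. Thus $W$ is a ribbon cobordism from $Y_{1}$ to $Y_{1}$ and $W'$ is one from $Y_{2}$ to $Y_{2}$, so Theorem~\ref{mainthm} gives that the inclusion of either copy of $Y_{1}$ into $W$, and of either copy of $Y_{2}$ into $W'$, is a $\pi_{1}$-isomorphism.

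To deduce that each inclusion $Y_{i} \hookrightarrow W_{j}$ is a $\pi_{1}$-iso, first recall from the DLVVW/Gordon diagram that the bottom-inclusion of each $W_{\ell}$ is $\pi_{1}$-injective and the top-inclusion is $\pi_{1}$-surjective. The top inclusion $Y_{1} \hookrightarrow W$ factors as $\pia \twoheadrightarrow \piwb \to \piw$, with the first arrow the top-inclusion of $W_{2}$ (surjective); since the composition is an iso, both arrows are isos. In particular $\pia \to \piwb$ is iso, and symmetrically $\pib \to \piwa$ is iso. For the bottom inclusions, factor the (iso) bottom inclusion $Y_{1} \hookrightarrow W$ as $\pia \hookrightarrow \piwa \to \piw$. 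By van Kampen, $\piw = \piwa *_{\pib} \piwb$, and since the edge maps $\pib \to \piwa$ (now iso) and $\pib \hookrightarrow \piwb$ are both injective, so is $\piwa \to \piw$. The composition $\pia \to \piw$ is iso with both factors injective, so both factors are iso, giving $\pia \to \piwa$ iso; symmetrically $\pib \to \piwb$ is iso.

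For the homotopy equivalence, use asphericity: each $Y_{i} = K(\pii, 1)$, so every ribbon cobordism $W_{j}$ admits a classifying map $r_{j} : W_{j} \to Y_{j}$ (unique up to homotopy). The composite $f := r_{j} \circ \iota_{i} : Y_{i} \hookrightarrow W_{j} \xrightarrow{r_{j}} Y_{j}$ realizes on $\pi_{1}$ the isomorphism produced in the previous step, so it is a $\pi_{1}$-iso between aspherical spaces and hence a homotopy equivalence; orientations are preserved because the constituent maps are. To upgrade $f$ to an equivalence of pairs $(Y_{i}, \partial Y_{i}) \to (Y_{j}, \partial Y_{j})$, one uses that $\partial Y_{i}$ is incompressible in the aspherical manifold $Y_{i}$, that the peripheral structures on $\pia$ and $\pib$ are compatible under the identifications above (the boundary components have a common cobordism through $\partial W_{j}$), and the standard relative Whitehead/Waldhausen-type argument for Haken manifolds with incompressible boundary. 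I expect this pair-upgrade, when $\partial Y_{i} \neq \emptyset$, to be the main technical subtlety; the $\pi_{1}$ argument is essentially formal once Theorem~\ref{mainthm} is available.
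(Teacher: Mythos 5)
Your $\pi_{1}$ argument is correct and, in one step, genuinely different from the paper's. The paper also stacks the two cobordisms, applies Theorem~\ref{mainthm} to the composite, and deduces exactly as you do that the surjections $\pia\to\piwb$ and $\pib\to\piwa$ induced by the outgoing ends are isomorphisms. But to see that $\piwa\to\piw$ is injective (which is what lets one split the isomorphism $\pia\to\piw$ through $\piwa$), the paper reruns the representation--variety injectivity argument of \cite[Proposition 2.1]{DLVVW} on the handles of $W_{2}$, invoking residual finiteness of $\piwa\cong\pib$. Your route --- van Kampen gives $\piw\cong\piwa *_{\pib}\piwb$, both edge maps are by then known to be injective, so the factors embed by the normal form theorem for amalgamated products --- reaches the same conclusion more elementarily and without residual finiteness at this step. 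Both are valid; yours is arguably cleaner.

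The gap is in your last paragraph. For the \emph{absolute} homotopy equivalence the classifying-map argument works, although ``orientations are preserved because the constituent maps are'' is not a proof: one must check that $f$ has degree one, which the paper does by showing that both boundary inclusions and the retraction induce isomorphisms on $H_{3}(\,\cdot\,,\partial\,;\Z)$, so that $f_{*}[Y_{i},\partial Y_{i}]=[Y_{j},\partial Y_{j}]$. More seriously, the theorem asserts a homotopy equivalence of \emph{pairs}, and an abstract classifying map $r_{j}\colon W_{j}\to Y_{j}$ carries no boundary control whatsoever: there is no reason for $r_{j}\circ\iota_{i}$ even to send $\partial Y_{i}$ into $\partial Y_{j}$. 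You correctly flag this as the main subtlety, but the gesture toward incompressibility, compatibility of peripheral structures, and a relative Whitehead/Waldhausen argument is not carried out, and as written the pair statement is unproved. The paper avoids the difficulty by constructing the retraction concretely rather than abstractly: turn the cobordism upside down so that it is built from $Y\times I$ by attaching $2$- and $3$-handles in $\inn(Y)\times\{1\}$; since the relevant end is $\pi_{1}$-surjective, each $2$-handle is attached along a null-homotopic curve and its core can be mapped to an immersed disk in $Y\times\{1\}$; asphericity kills the obstruction in $H^{3}(\,\cdot\,;\pi_{2}(Y))$ to extending over the $3$-handles. Because all handles avoid $\partial Y\times I$, this retraction is the identity near the vertical boundary, so the resulting map between the two ends fixes the boundary pointwise and the pair statement (and the degree-one statement) is immediate. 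If you wish to keep the classifying-map formulation, you must construct $r_{j}$ relative to the vertical boundary, which amounts to the same obstruction-theoretic argument.
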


We conclude this introduction by pointing out that during the time of writing of the present note, essentially the same result was independently found by Friedl, Misev and Zentner \cite{FMZ}.

\subsection*{Conventions and notation}

Given manifolds $Y$ and $Y'$, $Y\cong Y'$ means that $Y$ and $Y'$ are related by an orientation-preserving homeomorphism.
By a handle decomposition of a $4$-dimensional cobordism $W$ from $Y$ to $Y'$, we mean that $W$ is built from $Y\times[0,1]$ by attaching $1$-, $2$- and $3$-handles, where the attaching region of each handle is supported in $\inn(Y)\times\{1\}$, or in the boundary of previously attached handles.
In particular, if $\partial Y$ is non-empty, the attaching regions of the handles of $W$ avoid $\partial Y\times\{1\}\subset Y\times\{1\}$, and $\partial Y\cong\partial Y'$.

\subsection*{Acknowledgments}

I would like to thank to my advisor, Josh Greene, for the many helpful discussions we had along the course of this project.


\section{Proofs of results}\label{sec_proofs_mainthm}

The following lemma is used in the algebro-geometric portion of the proof of Theorem \ref{mainthm}, which, in turn, is virtually the same as the proof of \cite[Theorem 1.2]{Agol}.
We provide a proof of the lemma for completeness, but also to highlight the use of residual finiteness of fundamental groups of $3$-manifolds.

\begin{lemma}\label{lemma_res_fin}
Suppose $\Gamma$ is a residually finite group, and let $\gamma\in\Gamma\setminus\{1\}$.
Then there exists $n>0$ and a homomorphism $\rho\colon\Gamma\to\son$ such that $\rho(\gamma)\neq 1$.
\end{lemma}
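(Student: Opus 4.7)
The plan is to combine two ingredients: the defining property of residual finiteness (every nontrivial element survives in some finite quotient), and the fact that every finite group admits a faithful orthogonal representation via its regular representation.

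First, since $\Gamma$ is residually finite and $\gamma \neq 1$, there is a finite-index normal subgroup $N \trianglelefteq \Gamma$ with $\gamma \notin N$. Let $Q := \Gamma/N$ and let $\pi \colon \Gamma \to Q$ denote the quotient map, so that $\pi(\gamma) \neq 1$ in the finite group $Q$. The problem is thus reduced to producing a homomorphism $\varphi \colon Q \to \mathrm{SO}(n)$ with $\varphi(\pi(\gamma)) \neq 1$, since then $\rho := \varphi \circ \pi$ will do the job.

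Next, consider the action of $Q$ on itself by left multiplication. This gives an injective homomorphism $Q \hookrightarrow \mathrm{Sym}(Q) \cong S_m$, where $m = |Q|$: indeed, if $q \in Q$ acts trivially, then in particular $q = q \cdot 1 = 1$. Composing with the usual permutation-matrix representation $S_m \hookrightarrow \mathrm{O}(m)$, we obtain an injective homomorphism $Q \hookrightarrow \mathrm{O}(m)$. To upgrade this to $\mathrm{SO}$, I will use the standard embedding $\mathrm{O}(m) \hookrightarrow \mathrm{SO}(m+1)$ defined by $A \mapsto \mathrm{diag}(A,\det(A))$, which is clearly injective. Setting $n := m+1$ and letting $\varphi$ denote the composition $Q \hookrightarrow \mathrm{O}(m) \hookrightarrow \mathrm{SO}(n)$, we see that $\varphi$ is injective, so $\varphi(\pi(\gamma)) \neq 1$ as required.

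There is no genuine obstacle in this argument; the only subtlety is remembering to pass from $\mathrm{O}$ to $\mathrm{SO}$ (so that the determinant condition is not violated by odd permutations), which costs one dimension. The step that is actually being used nontrivially in the paper is residual finiteness of $\Gamma$, which in the intended application to $3$-manifold groups is provided by Hempel's theorem (building on geometrization).
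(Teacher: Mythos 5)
Your proof is correct and follows essentially the same route as the paper: residual finiteness produces a finite quotient in which $\gamma$ survives, Cayley's theorem embeds that quotient into a symmetric group, and the sign/determinant obstruction to landing in $\son$ is absorbed by adding one extra coordinate. Your map $A\mapsto\mathrm{diag}(A,\det A)$ is just a cleaner packaging of the paper's explicit formula on transpositions.
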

\begin{proof}
We first show that any finite group embeds into $\son$ for some $n>0$.
For this, recall that the symmetric group on $n$ elements $S_{n}$ is generated by the $n-1$ transpositions $\tau_{i,i+1}=(i,i+1)$, $i=1,\dots,n-1$.
For $i=1,\dots,n-1$, define $\varphi_{(i,i+1)}\colon\R^{n}\to\R^{n}$ by \[\varphi_{(i,i+1)}(x_{0},\dots,x_{i},x_{i+1},\dots,x_{n})=(-x_{0},\dots,x_{i+1},x_{i},\dots,x_{n}).\]
One can check that $\varphi_{(i,i+1)}\in\mathrm{SO}(n+1)$ for all $i=1,\dots,n-1$, and hence the above assignment defines an embedding of $S_{n}$ into $\mathrm{SO}(n+1)$.
Since any finite group embeds into $S_{n}$ for some $n>0$, it follows that the same holds with $S_{n}$ replaced by $\son$.

Now, let $\Gamma$ be residually finite and $\gamma\in\Gamma$ non-trivial.
By definition of residual finiteness, there exists a finite group $G$ and a surjection $q_{\gamma}\colon\Gamma\to G$ such that $q_{\gamma}(\gamma)\neq 1$.
Postcomposing $q_{\gamma}$ with an embedding of $G$ into $\son$, for some $n>0$, yields the claim.
\end{proof}

\begin{proof}[Proof of Theorem \ref{mainthm}]
Suppose that $W$ is a ribbon cobordism as in the statement of Theorem \ref{mainthm}.
By \cite[Theorem 1.14]{DLVVW}, we have that
\begin{equation}\label{diag_inclusions}
\begin{tikzcd}
\pia \arrow[hook]{r}{i_{1}} & \piw & \pib \arrow[swap, two heads]{l}{i_{2}},
\end{tikzcd}
\end{equation}
where $i_{k}$ is the map induced by inclusion $\iota_{k}\colon Y_{k}\to W$, $k=1,2$, and $\pia\cong\pib$.
As in \cite{Agol}, for $n>0$ and a manifold $X$, let $R_{n}(X)=R_{n}(\pi_{1}(X))$ denote the representation variety of $\pi_{1}(X)$ to $\son$.
By \cite[Proposition 2.1]{DLVVW}, we then have that
\begin{equation}\label{diag_restrictions}
\begin{tikzcd}
\rna & \arrow[swap, two heads]{l}{r_{1}} \rnw \arrow[hook]{r}{r_{2}} & \rnb,
\end{tikzcd}
\end{equation}
where $r_{k}$ is the restriction map, $k=1,2$.
As shown in \cite{Agol}, $r_{1}$ is obtained by projection of $\rnw$ onto the subspace spanned by the coordinates corresponding to $\pia$ (regarded as a subgroup of $\piw$) and hence is a polynomial map, and, moreover, $\rna$ and $\rnb$ are related by a polynomial isomorphism.
Precomposing this isomorphism with $r_{1}$, one obtains a surjective polynomial map $\varphi\colon\rnw\to\rnb$.
By the argument given in \cite{Agol}, $r_{2}$, in fact, embeds $\rnw$ into $\rnb$ as a real algebraic subset.
This allows one to show that $i_{2}\colon\pib\to\piw$ is injective as follows.
Given $\gamma\in\pib\setminus\{1\}$, one can, using residual finiteness of $3$-manifold groups (which follows from \cite[Theorem 3.3]{Thurston} and Geometrization) and Lemma \ref{lemma_res_fin}, find $n>0$ and a representation $\rho\in\rnb$ with the property that $\rho(\gamma)\neq 1$.
By the above, $\rnw\subset\rnb$ is an algebraic subset that admits a surjective polynomial map to $\rnb$ (namely, $\varphi$).
Thus, \cite[Lemma A.2]{Agol} implies that $\rnw=\rnb$, and it follows that the representation $\rho$ is the restriction of some representation $\rho'\in\rnw$.
Hence, $\rho'(i_{2}(\gamma))=(r_{2}(\rho'))(\gamma)=\rho(\gamma)\neq 1$, which implies that $i_{2}(\gamma)$ is non-trivial.
It follows that $i_{2}$ is injective and hence an isomorphism.

It remains to show that $i_{1}$ is an isomorphism.
To show this, we adapt an argument used in the proof of \cite[Proposition 9.2]{DLVVW}.
Set $\barw=-W$, so that $\barw$ is a rational homology cobordism from $Y_{2}$ to $Y_{1}$ which is built from $Y_{2}\times I$ by attaching $2$- and $3$-handles.
By what we have shown so far, the map $\pib\to\pibarw$ induced by inclusion is an isomorphism, which implies that each of the $2$-handles of $\barw$ is attached to $Y_{2}\times I$ along a null-homotopic curve in $Y_{2}\times\{1\}$, and hence each attaching curve bounds an immersed disk in $Y_{2}\times\{1\}$.
Let $\barw_{2}$ denote the space obtained by attaching just the $2$-handles of $\barw$ to $Y_{2}\times I$.
Define a map $\rho_{2}\colon\barw_{2}\to\ Y_{2}$ as follows.
First, shrink each of the $2$-handles to its core, then map each core to the disk in $Y_{2}\times\{1\}$ bounded by its attaching curve via a map that is the identity on the attaching curve itself, and, finally, apply the obvious deformation retraction of $Y_{2}\times I$ onto $Y_{2}=Y_{2}\times \{0\}$.
The obstruction to extending $\rho_{2}$ over the $3$-handles of $\barw$ lies in $H^{3}(\barw,\barw_{2};\pi_{2}(Y_{2}))$ (see e.g. \cite[Proposition 4.72]{Hatcher}).
Since we assumed $Y_{2}$ to be aspherical, this group vanishes, and $\rho_{2}$ extends to a retraction $\rho\colon\barw\to\ Y_{2}$.
Indeed, by definition of a ribbon cobordism between manifolds with boundary, we have that $\partial Y_{1}=\partial Y_{2}\times\{1\}\subset\barw$, and it follows that $\rho$ is the identity in a neighborhood of $\partial Y_{2}\times\{1\}\subset\barw$.
Letting $\rho_{*}\colon\pi_{1}(\barw)\to\pib$ denote the map induced by $\rho$, it follows that $\rho_{*}\circ i_{2}=(\rho\circ\iota_{2})_{*}=(\id_{Y_{2}})_{*}=\id_{\pib}$, which implies that $\rho_{*}$ is an isomorphism, because $i_{2}$ is.
Consider now the map $f=\rho\circ \iota_{1}\colon Y_{1}\to Y_{2}$.
Since $H_{3}(W, Y_{2}\times[0,1];\Z)\cong H_{3}(Y_{2}, \partial Y_{2};\Z)\cong\Z$, and because $\rho$ is a retraction, $\rho$ induces an isomorphism on the level of third integral homology.
Similarly, the inclusion $\iota_{1}\colon Y_{1}\to W$ induces an isomorphism of third integral homology groups.
It follows that the map induced by $f$ sends the relative fundamental class $[Y_{1},\partial Y_{1}]\in H_{3}(Y_{1},\partial Y_{1};\Z)$ to $[Y_{2},\partial Y_{2}]\in H_{3}(Y_{2},\partial Y_{2};\Z)$.
That is, $f$ is an orientation-preserving degree one map.
Now, by \eqref{diag_inclusions}, $i_{1}$ is injective, and it follows that $f_{*}\colon\pia\to\pib$ is injective, because $\rho_{*}$ is an isomorphism.
By \cite[Lemma 1.2]{Rong}, $f_{*}$ is also surjective, and hence an isomorphism.
It follows that $i_{1}\colon\pia\to\piw$ is an isomorphism, as desired.
\end{proof}

\begin{proof}[Proof of Theorem \ref{mainthm_2}]
Let $W$ be the composition of the cobordisms $W_{1}$ and $W_{2}$, i.e. $W=W_{1}\cup_{Y_{2}}W_{2}$, so that $W$ is a ribbon cobordism from $Y_{1}$ to itself.
Letting $h_{i}\colon\piwi\to\piw$ denote the map induced by inclusion of $W_{i}$ into $W$, $i=1,2$, and using \cite[Theorem 1.14]{DLVVW}, we obtain the following diagram of maps induced by inclusion on the level of fundamental groups.

\begin{equation}\label{eq_longchain}
\begin{tikzcd}
\pia \arrow[r, "i_{1}^{1}", hook] \arrow[rrd, bend right=13, "\cong", sloped] & \piwa \arrow[rd, "h_{1}"] & \pib \arrow[l, "i_{2}^{1}"', two heads] \arrow[r, "i_{1}^{2}", hook] & \piwb \arrow[ld, "h_{2}"'] & \pia \arrow[l, "i_{2}^{2}"', two heads] \arrow[swap, lld, bend left=13, "\cong"', sloped] \\ & & \piw & &
\end{tikzcd}
\end{equation}

The fact that the maps $h_{1}\circ i_{1}^{1}$ and $ h_{2}\circ i_{2}^{2}$ are isomorphisms follows from Theorem \ref{mainthm}.
This immediately implies that $i_{2}^{2}$ is injective, and hence an isomorphism.
Switching the roles of $W_{1}$ and $W_{2}$, we see that $i_{2}^{1}$ is an isomorphism as well.
Moreover, using the fact that $W_{2}$ is a ribbon cobordism, it follows by an argument similar to the one used to show injectivity of $i_{1}^{1}$ and $i_{1}^{2}$ (see e.g. the proof of \cite[Proposition 2.1]{DLVVW}) that $h_{1}$ is injective.
Note that, for that argument to apply, we need the fact that $\piwa$ is residually finite; but $\piwa\cong\pib$ via $i_{2}^{1}$, and $\pib$, being the fundamental group of a compact $3$-manifold, is residually finite.
Now, since $h_{1}\circ i_{1}^{1}$ is an isomorphism, $h_{1}$ is also surjective and hence an isomorphism, which implies that $i_{1}^{1}$ is an isomorphism, too.
Switching the roles of $W_{1}$ and $W_{2}$, we see that $h_{2}$ and $i_{2}^{1}$ are isomorphisms as well.
Hence all maps in \eqref{eq_longchain} are isomorphisms.

It remains to prove the existence of the claimed homotopy equivalences.
To that end, note that, because all horizontal maps in \eqref{eq_longchain} are isomorphisms, we can apply the argument from the last paragraph of the proof of Theorem \ref{mainthm} to $W_{1}$ to obtain an orientation-preserving degree one map $f\colon Y_{1}\to Y_{2}$ that induces an isomorphism of fundamental groups.
Since we assumed $Y_{1}$ and $Y_{2}$ to be aspherical, it follows that $f$ induces an isomorphism on all homotopy groups and hence is a homotopy equivalence by Whitehead's theorem (see e.g. \cite[Theorem 4.5]{Hatcher}).
Moreover, by construction of the retraction $\rho$ from the proof of Theorem \ref{mainthm}, we have that $f(\partial Y_{1})\subset\partial Y_{2}$ and, indeed, that $f$ fixes $\partial Y_{1}$ pointwise.
It follows that $f\colon(Y_{1},\partial Y_{1})\to(Y_{2},\partial Y_{2})$ is a homotopy equivalence, as desired.
The above argument applied to $W=W_{2}$ yields a homotopy equivalence going in the other direction.
\end{proof}

We are now in a position to prove our main result.

\begin{proof}[Proof of Theorem \ref{thm_conj}]
Observe that, by \eqref{eq_longchain}, $Y_{1}$ is a lens space iff $Y_{2}$ is.
Assume first that $Y_{1}$, and hence, by definition of ribbon cobordism, also $Y_{2}$, is closed.
If both $Y_{1}$ and $Y_{2}$ are lens spaces, the claim is a straightforward consequence of \cite[Theorem 1.2]{Huber}, so we may assume that both $Y_{1}$ and $Y_{2}$ are aspherical.
By Theorem \ref{mainthm_2}, there exists an orientation-preserving homotopy equivalence $f\colon Y_{1}\to Y_{2}$.
Note that $\pib$ is not just infinite, but torsion-free by \cite[(C.2) and (C.3)]{Aschenbrenner-Friedl-Wilton}, and hence the Borel Conjecture in dimension three \cite[Theorem 0.7]{Kreck-Lueck} implies that $f$ is homotopic to a homeomorphism.
This homeomorphism must be orientation-preserving, because this property is preserved under homotopy, and it follows that $Y_{1}\cong Y_{2}$.

It remains to address the case where $Y_{1}$, and hence also $Y_{2}$, has non-empty boundary.
By what we assumed, it follows that both $Y_{1}$ and $Y_{2}$ are aspherical.
By Theorem \ref{mainthm_2}, there exists an orientation-preserving homotopy equivalence $f\colon(Y_{1},\partial Y_{1})\to(Y_{2},\partial Y_{2})$.
Since $Y_{i}$ has non-empty boundary, and hence is Haken, $i=1,2$, $f$ is homotopic to a homeomorphism from $Y_{1}$ to $Y_{2}$ by \cite[Corollary 6.5]{Waldhausen}.
As before, this homeomorphism must be orientation-preserving, because $f$ was, and it follows that $Y_{1}\cong Y_{2}$.
\end{proof}


\bibliographystyle{alpha}
\bibliography{biblio.bib}

\end{document}